\newtheorem{thm}{Theorem}
\theoremstyle{definition}
\newtheorem{remark}{Remark}
\subjclass{Primary 47B35; Secondary 47L80}
\keywords{Toeplitz operator, Quasihomogeneous symbol}
\author{Khitam Aqel \and Issam Louhichi}
\address{Department of Mathematics \& Statistics, College of Arts \& Sciences, American University of Sharjah, P.O.Box 2666, Sharjah, UAE.}
\begin{document}
\title[On the commutativity of sums of Toeplitz operators ]
{On the commutativity of sums of Toeplitz operators on the Bergman
space}
\email{g00047623@aus.edu}
\email{ilouhichi@aus.edu}
\date{\today} 
\begin{abstract}
In this paper, we discuss the commutativity of sums of two quasihomogeneous Toeplitz operators on the Bergman space of the unit disc. Our main result goes in the direction of the conjecture in \cite[p.~263]{lr}.	
\end{abstract}	
\maketitle
Let $\mathbb{D}$ be the unit disc of the complex plane $\mathbb{C}$. We denote by $L^2_a$, and we call it the usual unweighted Bergman space, the Hilbert space of analytic functions on $\mathbb{D}$ that are square integrable with respect to the normalized Lebesgue measure $dA(z)=rdr\frac{d\theta}{\pi}$, where $(r,\theta)$ are the polar coordinates. $L^2_a$ is a closed subspace of the Hilbert space $L^{2}(\mathbb{D},dA)$ and has the set $\{\sqrt{n+1}z^n\ |\ n=0, 1, 2,\ldots\}$ as an orthonormal basis. We denote by $P$ the orthogonal projection from $L^{2}(\mathbb{D},dA)$ onto $L^2_a$, called the Bergman projection. We define on $L^2_a$ the Toeplitz operator $T_h$ with symbol a bounded function $h$ by $T_hu=P(hu)$ for any $u\in L^2_a$.

One of the most intriguing questions for Toeplitz operators on the Bergman space is: When is the product (in a sense of composition)  of two Toeplitz operators commutative? Although a lot of work has been done on this question \cite{ac, acr, c, cr, l, lr, lra, lz, v}, we still have not reach a complete answer. 

The main motivation of this paper is the results of \cite{lr}. We shall show that under some assumptions if sums of two quasihomogeneous Toeplitz operators commute, then they are equal up to a multiplicative constant. A symbol $h$ is said to be quasihomogeneous of an integer degree $p$ if $h$ can be written as $h(re^{i\theta})=e^{ip\theta}\omega(r)$, where $\omega$ is a radial function in $\mathbb{D}$ i.e., $\omega(z)=\omega(|z|)$. In this case, the associated Toeplitz operator $T_h$ is also called quasihomogeneous Toeplitz operator of degree $p$. This class of operators got the interest of many people \cite{cr, l, lr, lr2, lra, lsz, lz} and have been largely studied since. 

Consider two bounded quasihomogeneous Toeplitz operators $T_{e^{ip\theta}\phi}$ and $T_{e^{is\theta}\psi}$ with $p, s\in\mathbb{N}$.  Assume that these two operators have roots $T_{e^{i\theta}\widetilde{\phi}}$ and $T_{e^{i\theta}\widetilde{\psi}}$ respectively (see \cite{l,lr2}) i.e.,
$$T_{e^{ip\theta}\phi}=\left(T_{e^{i\theta}\widetilde{\phi}}\right)^p
\textrm{ and } T_{e^{is\theta}\psi}=\left(T_{e^{i\theta}\widetilde{\psi}}\right)^s.$$
The purpose of this work is to characterize quasihomogeneous Toeplitz operators $T_{e^{im\theta}f}$ and $T_{e^{il\theta}g}$ where $m,l\in\mathbb{N}$,  such that
\begin{displaymath}
\left\{\begin{array}{ll}
(H1)& T_{e^{im\theta}f}+ T_{e^{il\theta}g}
\textrm{ commutes with } T_{e^{ip\theta}\phi}+T_{e^{is\theta}\psi},\\
(H2)& 1\leq p<s, \ 1\leq m<l, \textrm{ and } l+p=m+s.
\end{array}\right.
\end{displaymath}
Hypothesis $(H1)$ implies that for all $k\geq 0$, we have
$$\left(T_{e^{im\theta}f}+ T_{e^{il\theta}g}\right)\left(T_{e^{ip\theta}\phi}+T_{e^{is\theta}\psi}\right)(z^k)
=\left(T_{e^{ip\theta}\phi}+T_{e^{is\theta}\psi}\right)\left(T_{e^{im\theta}f}+ T_{e^{il\theta}g}\right)(z^k).$$
Now hypothesis  $(H2)$, combined with \cite[Remark~2]{lr}, implies that for all $k\geq 0$ we must have
\begin{eqnarray}
T_{e^{im\theta}f}T_{e^{ip\theta}\phi}(z^k)&=&T_{e^{ip\theta}\phi}T_{e^{im\theta}f}(z^k),\\
T_{e^{il\theta}g}T_{e^{is\theta}\psi}(z^k)&=&T_{e^{is\theta}\psi}T_{e^{il\theta}g}(z^k),\\
\left(T_{e^{im\theta}f}T_{e^{is\theta}\psi}+T_{e^{il\theta}g}T_{e^{ip\theta}\phi}\right)(z^k)
&=&\left(T_{e^{is\theta}\psi}T_{e^{im\theta}f}+T_{e^{ip\theta}\phi}T_{e^{il\theta}g}\right)(z^k).
\end{eqnarray}

(1) and (2) imply that $T_{e^{im\theta}f}$ (resp. $T_{e^{il\theta}g}$) commutes with $T_{e^{ip\theta}\phi}$ (resp. $T_{e^{is\theta}\psi}$). Therefore, using \cite[Proposition~2 and Lemma ~2]{lr}, we obtain that
\begin{equation}\label{root_f}
 T_{e^{im\theta}f}=c_1\left(T_{e^{i\theta}\widetilde{\phi}}\right)^m,
\end{equation}
 and
\begin{equation}\label{root_g}
 T_{e^{il\theta}g}=c_2\left(T_{e^{i\theta}\widetilde{\psi}}\right)^l,
\end{equation}
for some constants $c_1$ and $c_2$. To avoid the trivial case, which is $ T_{e^{im\theta}f}$ and $T_{e^{il\theta}g}$ being the zero operator, we assume that   $c_1$  and $c_2$ are nonzero constants. Thus Equation (3) can be written as 
\begin{equation}\label{com}
c_1\left[\left(T_{e^{i\theta}\widetilde{\phi}}\right)^m,T_{e^{is\theta}\psi}\right](z^k)=c_2\left[T_{e^{ip\theta}\phi},\left(T_{e^{i\theta}\widetilde{\psi}}\right)^l\right](z^k)\textrm{ for all }k\geq 0,
\end{equation}
where $[A,B]=AB-BA$ denotes the commutator of the operators $A$ and $B$. 

\begin{remark}
\begin{itemize}
\item[i)] If $T_{e^{ip\theta}\phi}$ and $T_{e^{is\theta}\psi}$ commute with each other, then \cite[Proposition~2 and Lemma ~2]{lr} imply that $T_{e^{i\theta}\widetilde{\psi}}= c T_{e^{i\theta}\widetilde{\phi}}$ for some constant $c$. Moreover, \cite[Corollary ~1]{lr} implies that all four Toeplitz operators $T_{e^{ip\theta}\phi}$, $T_{e^{is\theta}\psi}$, $T_{e^{im\theta}f}$ and $T_{e^{il\theta}g}$ commute with each other, and therefore they are all of the form constant times power of a single Toeplitz operator $T_{e^{i\theta}\widetilde{\phi}}$. So without loss of generality, we assume $[T_{e^{ip\theta}\phi}, T_{e^{is\theta}\psi}]\neq 0$ from now on.
\item[ii)] The case $p=s$ (resp. $l=m$) has been extensively studied and totally solved. See \cite{cr, l, lz}.
\item[iii)] We shall show that for a certain class of Toeplitz operators $T_{e^{ip\theta}\phi}$, $T_{e^{is\theta}\psi}$ if $(H1)$ and $(H2)$ hold, then $m=p$, $l=s$, and hence $c_1	=c_2$. In other words $T_{e^{im\theta}f}+ T_{e^{il\theta}g}$ is simply  constant times  $T_{e^{ip\theta}\phi}+T_{e^{is\theta}\psi}$. In fact, if $m=p$ (resp. $l=s$), then $(H2)$ implies $l=s$ (resp. $m=p$). Moreover, equations (\ref{root_f}) and (\ref{root_g}) imply $T_{e^{im\theta}f}=c_1T_{e^{ip\theta}\phi}$ and $T_{e^{il\theta}g}=c_2T_{e^{is\theta}\psi}$, for some constants $c_1, c_2$. Thus Equation (\ref{com}) becomes
$$c_1\left[T_{e^{ip\theta}\phi},T_{e^{is\theta}\psi}\right](z^k)=c_2\left[T_{e^{ip\theta}\phi},T_{e^{is\theta}\psi}\right](z^k)\textrm{ for all }k\geq 0,$$
and therefore $c_1=c_2$ since we assume that  $[T_{e^{ip\theta}\phi}, T_{e^{is\theta}\psi}]\neq 0$.
\end{itemize}
\end{remark}
Quasihomogeneous Toeplitz operators have an interesting property which is acting on the elements of the orthogonal basis of $L^2_a$ as shift operators with holomorphic weight \cite{lr}. In fact if $\phi$ is a bounded radial function and $p$ a positive integer, then for all $k\geq 0$, we have
\begin{equation*} 
\begin{split}
T_{e^{ip\theta}\phi}(\zeta^k) (z)& =\displaystyle{ \int_{0}^{1}\int_{0}^{2\pi}\phi(r)r^k\sum\limits_{j=0}^{\infty}(j+1)e^{i(k+p-j)\theta}r^jz^j\frac{1}{\pi}rdr d\theta}\\
& = 2(k+p+1)\int_{0}^{1}\phi(r)r^{2k+p+1}dr\ z^{k+p}.
\end{split}
\end{equation*}
Now, we define the Mellin transform of a function $\phi$ in $L^{1}([0,1],rdr)$ by
\begin{equation*}
\mathcal{M}\left(\phi\right)(z)=\int_{0}^{1}\phi(r)r^{z-1}dr.
\end{equation*}
It is easy to see that $\mathcal{M}\left(\phi\right)$ is bounded and holomorphic in the right-half plane $\{z\in \mathbb{C}|\Re z>2\}$. Thus
\begin{equation*}
T_{e^{ip\theta}\phi}(\zeta^k) (z)=2(k+p+1)\mathcal{M}\left(\phi\right)(2k+p+2)z^{k+p}.
\end{equation*}

The class of symbols we will be dealing with are those of the form $e^{ip\theta}\phi$ where $\phi(r)=r^{(2M+1)p}$, with $M\geq 1$ being  integer. It has been shown in \cite[Remark~15, ii)]{l} that in this case the root $T_{e^{i\theta}\widetilde{\phi}}$ exists and $\widetilde{\phi}$  is a polynomial in $r$ whose Mellin transform satisfies 
\begin{equation}\label{mellinroot}
\displaystyle{ \mathcal{M}\left(r\widetilde{\phi}\right)(z)=\frac{\prod\limits_{j=0}^{M-1}(z+2jp+2p)}{\prod\limits_{j=0}^{M}(z+2jp+2)}}, \textrm{ for }\Re z>2. 
\end{equation}
We are now ready to state and prove our main result.

\begin{thm}
Let $\phi(r)=r^{(2M+1)p}$ and $\psi(r)=r^{(2N+1)s}$ with $p<s$, $M$, and $N$ being all integers greater or equal to $1$. If there exist $m, l\in\mathbb{N}$ and nontrivial radial functions $f, g$ such that  $(H1)$ and $(H2)$ are satisfied, then $m=p,\ l=s$ and
$$T_{e^{im\theta}f}+T_{e^{il\theta}g}=c\left(T_{e^{ip\theta}\phi}+T_{e^{is\theta}\psi}\right)$$ for some constant $c$.
\end{thm}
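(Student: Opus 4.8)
The plan is to turn Equation~(\ref{com}) into an identity of rational functions in the integer variable $k$ and then force $m=p$ by a pole-matching argument. Using $\mathcal{M}(\phi)(z)=(z+(2M+1)p)^{-1}$, $\mathcal{M}(\psi)(z)=(z+(2N+1)s)^{-1}$ and formula~(\ref{mellinroot}), one computes the scalars by which the relevant operators act on the basis: $T_{e^{ip\theta}\phi}(z^k)=\alpha_k z^{k+p}$, $T_{e^{is\theta}\psi}(z^k)=\beta_k z^{k+s}$, $\bigl(T_{e^{i\theta}\widetilde{\phi}}\bigr)^m(z^k)=F_m(k)z^{k+m}$ and $\bigl(T_{e^{i\theta}\widetilde{\psi}}\bigr)^l(z^k)=G_l(k)z^{k+l}$, where
$$\alpha_k=\frac{k+p+1}{k+1+(M+1)p},\qquad \beta_k=\frac{k+s+1}{k+1+(N+1)s},$$
$$F_m(k)=\prod_{j=1}^{M}\frac{k+1+jp}{k+m+1+jp},\qquad G_l(k)=\prod_{j=1}^{N}\frac{k+1+js}{k+l+1+js};$$
in particular $F_p=\alpha$ and $G_s=\beta$. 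Plugging these into~(\ref{com}) and using $m+s=l+p$ turns it into the identity, valid for all $k\geq 0$ and hence identically in $k$,
\begin{equation}\label{ratid}
c_1\bigl(\beta_k F_m(k+s)-F_m(k)\beta_{k+m}\bigr)=c_2\bigl(G_l(k)\alpha_{k+l}-\alpha_k G_l(k+p)\bigr).
\end{equation}
I would also record the telescoping relations $F_a(k)F_b(k+a)=F_{a+b}(k)$ and $G_a(k)G_b(k+a)=G_{a+b}(k)$, which give $F_m(k)=\alpha_k F_{m-p}(k+p)$ when $m>p$.

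By Remark~1(iii) it suffices to prove $m=p$, so suppose $m\neq p$ and seek a contradiction; by $(H2)$ we have either $m<p$ ($\Leftrightarrow l<s$) or $m>p$ ($\Leftrightarrow l>s$). Set $k_0:=-(m+p+1)$. This is a root of the denominator $\prod_{j=1}^{M}(k+m+1+jp)$ of $F_m(k)$ (the $j=1$ factor), hence a pole of $F_m(k)$. For $m\neq p$ this pole is simple and is not cancelled by the numerator $\prod_{j=1}^{M}(k+1+jp)$ --- a cancellation would force $m$ to be a multiple of $p$ --- and provided $m+p\neq(N+1)s$ the companion term $\beta_k F_m(k+s)$ is regular at $k_0$, since $\beta_k$ has its only pole at $-(1+(N+1)s)$ and $F_m(k_0+s)$ is finite; thus the left side of~(\ref{ratid}) genuinely blows up at $k_0$. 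The right side of~(\ref{ratid}), however, has poles only among the points $-(1+(M+1)p)$, $-(1+l+(M+1)p)$, $-(l+1+js)$, $-(l+1+js+p)$ with $1\leq j\leq N$, and a short computation using $1\leq p<s$ and $l+p=m+s$ shows that $k_0$ is none of these unless $m=Mp$. Hence, whenever $p\nmid m$ and $m+p\neq(N+1)s$, the two sides of~(\ref{ratid}) cannot agree at $k_0$ --- contradiction. In particular this settles the case $m<p$ entirely, and every case $m>p$ with $p\nmid m$ and $m+p\neq(N+1)s$.

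It then remains to handle the exceptional configurations: $p\mid m$, say $m=ip$ with $2\leq i\leq M$, and the coincidences $m+p=(N+1)s$ or an overlap between the left-side pole progressions $\{-(m+1+jp)\}_j$, $\{-(m+s+1+jp)\}_j$ and the right-side progressions $\{-(l+1+js)\}_j$, $\{-(l+1+js+p)\}_j$. In each such case I would replace $k_0$ by a deeper denominator root of $F_m(k)$: when $m=ip$ the surviving roots of $\prod_{j=1}^{M}(k+m+1+jp)$ after cancellation are $-(1+(M+1)p),\ldots,-(1+(M+i)p)$, and the goal is to show --- still using only $p<s$, $m<l$ and $l+p=m+s$ --- that at least one of them is not among the right-side candidate poles above (falling back, if need be, to a denominator root of $F_m(k+s)$). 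Producing such an unmatched pole in every remaining configuration is the technical core of the argument, and the step where I expect essentially all of the work to lie: one must show that a progression of poles of step $p$ (together with a single shifted copy) can never be absorbed into a progression of step $s>p$ (together with two special points). Once any unmatched pole is exhibited, Equation~(\ref{ratid}) is contradicted, so $m=p$, and the theorem follows from Remark~1(iii).
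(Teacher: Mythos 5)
Your reduction is, in substance, identical to the paper's: the rational identity you derive from (\ref{com}) is term for term Equation (\ref{Prod}) of the paper, with $\beta_k F_m(k+s)=R_1P_1$, $F_m(k)\beta_{k+m}=R_2P_2$, $G_l(k)\alpha_{k+l}=R_3P_3$ and $\alpha_k G_l(k+p)=R_4P_4$, and your chosen pole $k_0=-(m+p+1)$ is exactly the pole $k+m+p+1$ of $P_2$ that the paper uses in its Case I. That part of your argument is correct: it disposes of every $m$ that is not a multiple of $p$ (hence in particular of all $m<p$), just as the paper's Case I does, and you are in fact slightly more careful than the paper in flagging the coincidence $m+p=(N+1)s$, where both left-hand terms are singular at $k_0$ and residues would have to be compared.

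The genuine gap is that everything beyond this point is announced rather than proved. The case $m=tp$ with $t\geq 2$ is not a residual ``exceptional configuration'' to be polished off; it is where essentially all of the difficulty lies, and it occupies the bulk of the paper's proof (its Case II, split into $M\leq t$ and $M>t$, then $M=3$ versus $M\geq 4$, $t=2$ versus $t\geq 3$, $t$ even versus odd, and $N=1,2,3$ versus $N$ large). Once $p\mid m$, numerator--denominator cancellations really occur, and your guiding slogan --- that a pole progression of step $p$ cannot be absorbed into one of step $s>p$ --- fails pole by pole in resonant situations such as $s=2p$, $j^*s=(M-t+2)p$ or $j^*s=(M-t+1)p$, where individual left-hand poles genuinely coincide with right-hand poles; there the contradiction cannot come from exhibiting a single unmatched simple pole, but must come from multiplicity and counting arguments (a pole occurring twice on one side and at most once on the other, or the total numbers of poles on the two sides being forced apart), combined with the small-parameter cases just listed. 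Your proposed fallback (``replace $k_0$ by a deeper denominator root of $F_m(k)$, or of $F_m(k+s)$'') does not identify which pole works in which configuration, and in some configurations no such uncancelled unmatched pole exists at all; also, your restriction to $m=ip$ with $2\leq i\leq M$ silently omits the multiples $m=ip$ with $i>M$, which the paper must treat as well (its situation $M\leq t$). Until this case analysis is actually carried out, your argument proves the theorem only when $p\nmid m$, so as it stands the proposal is incomplete precisely at the technical core of the result.
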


\begin{proof} We shall keep the same notation introduced earlier that is $T_{e^{i\theta}\widetilde{\phi}}$ (resp. $T_{e^{i\theta}\widetilde{\psi}}$) is the root of $T_{e^{ip\theta}\phi}$ (resp.  $T_{e^{ip\theta}\psi}$). Since $(H1)$ and $(H2)$ are satisfied, equations (\ref{root_f}), (\ref{root_g}) and (\ref{com}) hold. Moreover, using (\ref{mellinroot}), Equation (6) becomes
\begin{equation}\label{Prod}
c_1\left(R_1P_1-R_2P_2\right)\\
=c_2\left(R_3P_3-R_4P_4\right), \textrm{ for all } k\geq 0
\end{equation}
where 
$$ R_1=\frac{k+s+1}{k+(N+1)s+1},\  \displaystyle{P_1=\prod_{j=1}^{M}\left[\frac{k+s+jp+1}{k+s+m+jp+1}\right]},$$
$$R_2= \frac{k+m+s+1}{k+m+(N+1)s+1},\ P_2=\prod_{j=1}^{M}\left[\frac{k+jp+1}{k+m+jp+1}\right],$$
$$R_3=\frac{k+l+p+1}{k+l+(M+1)p+1},\ \displaystyle{\  P_3=\prod_{j=1}^{N}\left[\frac{k+js+1}{k+l+js+1}\right]},$$ 
$$R_4=\frac{k+p+1}{k+(M+1)p+1},\  P_4=\prod_{j=1}^{N}\left[\frac{k+p+js+1}{k+l+p+js+1}\right].$$
The proof is mainly to show that Equation (\ref{Prod}) is true if and only if $m=p$ and so $l=s$ by $(H2)$. For the sufficiency, 
it is clear that if $m=p$ and so $l=s$, Equation (\ref{Prod}) is reduced to
\begin{eqnarray*}c_1\Big[\frac{k+s+1}{k+(N+1)s+1}\frac{k+s+p+1}{k+s+(M+1)p+1}-&\\\frac{k+p+s+1}{k+p+(N+1)s+1}\frac{k+p+1}{k+(M+1)p+1}\Big]=&\\c_2\Big[\frac{k+s+p+1}{k+s+(M+1)p+1}\frac{k+s+1}{k+(N+1)s+1}-&\\\frac{k+p+1}{k+(M+1)p+1}\frac{k+p+s+1}{k+p+(N+1)s+1}\Big] \textrm{ for all }k\geq 0.
\end{eqnarray*}
Obviously this is possible if and only if $c_1=c_2$, and therefore the desired result is obtained. To prove the necessity, we shall proceed by contradiction. We shall assume $m\neq p$ i.e., $l\neq s$ and we shall show that the set of poles of the left-hand side (LHS) of Equation (\ref{Prod}) is not equal to the set of poles of its right-hand side (RHS). To do so, either we exhibit one pole in the LHS (resp. RHS) and show that it does not appear in the RHS (resp. LHS) or we count the poles of each sides and show that the totals are distinct. For easiness, we will call pole any term in the denominator of $R_i$ or $P_i$. For example, $k+s+m+Mp+1$ is a pole obtained by taking $j=M$ in the denominator of $P_1$. 

The key of this proof by contradiction is the following: first we shall assume $m\neq tp$ for $t\geq 1$, and second $m=tp$ but for $t\geq 2$. Thus, in the end and after reaching the contradiction, we shall be left with the only possibility $m=p$.  
\begin{equation*}
\framebox[1\width]{\textrm{ {\bf{I.}} Assume } $m\neq tp$ \textrm{ for } $t\geq 1$ }
\end{equation*}
In this case none of the terms in the denominator of $P_2$ can be canceled by its numerator. Let us consider the pole $k+m+p+1$ obtained by taking $j=1$ in the denominator $P_2$. Obviously, this pole is not eliminated by the numerator of $R_2$ because $s>p$. We shall show that this pole does not appear in RHS:
\begin{itemize} 
\item[$\ast$] Can $k+m+p+1$ be equal to the pole $k+l+(M+1)p+1$  of $R_3$? In this case we have $m=l+Mp$ which is impossible because $m<l$.
\item[$\ast$] Can $k+m+p+1$ be equal to a pole $k+l+js+1$  of $P_3$ for some $1\leq j\leq N$? In this case $m+p=l+js$ which is not possible because $m<l$ and $p<s$.
\item[$\ast$] Can $k+m+p+1$ be equal to the pole $k+(M+1)p+1$  of $R_4$? In this case $m=Mp$ which contradicts our assumption that $m$ is not a multiple of $p$.
\item[$\ast$] Can $k+m+p+1$ be equal to a pole  $k+l+p+js+1$ of $P_4$ for some $1\leq j\leq N$? In this case $m=l+js$ which is not possible since $m<l$.
\end{itemize}
We conclude by saying that, under the assumption $m\neq tp$ for $t\geq 1$, Equation (\ref{Prod}) cannot be satisfied since we were able the find a pole from LHS that does not appear in RHS.
\begin{equation*}
\framebox[1\width]{\textrm{ {\bf{II.}} Assume } $m=tp$ \textrm{ for } $t\geq 2$ }
\end{equation*}
We shall discuss two situations: "$M\leq t$" and "$M>t$". In fact when $M>t$, terms of the numerators of $P_1$ (resp. $P_2$)  would cancel some poles of the denominator. This is not the case when $M\leq t$.
\begin{itemize}
\item{\underline{$1^{st}$ Situation: $M\leq t$.}} Consider the pole $k+m+p+1=k+(t+1)p+1$ of $P_2$ obtained by taking $j=1$. Since $M\leq t$, this pole is not canceled by the numerator of $P_2$. Also, since $p<s$ the same pole is not canceled by the numerator of $R_2$. Moreover, it is easy to see that $k+m+p+1$ is equal to none of the poles of $R_3$, $P_3$ and $P_4$ because $m<l$ and $p<s$. We still have to check if $k+m+p+1$ can be equal to the pole  $k+(M+1)p+1$ of $R_4$. In this case $m=Mp$ i.e., $t=M$. Since $t\geq 2$, the denominator of $P_2$ will have at least two poles. Consider the pole $k+m+2p+1$ of $P_2$ obtained by taking $j=2$:
\begin{itemize}
	\item[$\ast$] When this pole is not canceled by the numerator of $R_2$ i.e., $s\neq 2p$, it is easy to see that $k+m+2p+1$ is not equal to any of the poles of RHS because $m<l$ and $p<s$.
	\item[$\ast$] Assume $k+m+2p+1$ is canceled by the numerator of $R_2$ i.e., $s=2p$. In this case $l=m+s-p=m+p=(M+1)p$. Then:
	\begin{itemize}
		\item[$\star$] If $M=2$: the pole $k+m+p+1=k+3p+1$ of $P_2$ obtained by taking $j=1$ does not appear in RHS because the pole of $R_4$ is  eliminated by the first term of the numerator of $P_4$ (since $s=2p$) and this for any $N\geq 1$.
		\item[$\star$] If $M\geq 3$: the pole $k+Mp+3p+1$ appears at least twice in LHS when taking $j=1$ in the denominator of $P_1$ and $j=3$ in the denominator of $P_2$. However this same pole appears at most once in RHS when taking $j=1$ in the denominator of $P_3$. 
	\end{itemize}
\end{itemize} 
Therefore, $k+m+p+1$ cannot be equal to $k+(M+1)p+1$, and hence we conclude by saying that under the assumption $m=tp$ for $t\geq 2$ and when $M\leq t$, Equation (\ref{RP}) is not satisfied since there exists a pole of LHS that is not a pole of RHS.
\item{\underline{$2^{nd}$ Situation: $M>t\geq 2$.}} In this case poles of $P_1$ (resp. $P_2$) are canceled by terms of its numerator, and Equation (\ref{Prod}) becomes
\begin{multline}\label{RP}
c_1\Big[\frac{k+s+1}{k+(N+1)s+1}\frac{\prod\limits_{j=1}^{t}k+s+jp+1}{\prod\limits_{j=M-t+1}^{M}k+s+tp+jp+1}-\\ \frac{k+tp+s+1}{k+tp+(N+1)s+1}\frac{\prod\limits_{j=1}^{t}k+jp+1}{\prod\limits_{j=M-t+1}^{M}k+tp+jp+1}\Big]=\\	
c_2\Big[\frac{k+l+p+1}{k+l+(M+1)p+1}\prod_{j=1}^{N}\frac{k+js+1}{k+l+js+1}-\\\frac{k+p+1}{k+(M+1)p+1}\prod_{j=1}^{N}\frac{k+p+js+1}{k+l+p+js+1}\Big].	
\end{multline}
It is very important to notice here that LHS has at most $2t+2$ poles and at least $2t$ poles. In fact, the pole of $R_1$ could be eliminated by the numerator of $P_1$ and the numerator of $R_2$ could cancel a pole of $P_2$. However RHS has at most $2N+2$ poles. Thus, if $N<t-1$ Equation (\ref{RP}) cannot be satisfied. Therefore we must have $N\geq t-1$.\\
Now, consider the pole $k+s+tp+(M-t+1)p+1=k+s+(M+1)p+1$ of $P_1$ obtained by taking $j=M-t+1$. Clearly it is not eliminated by the numerator of $R_1$. We shall show that this pole does not appear in RHS:
\begin{itemize}
	 
\item[$\ast$] Can $k+s+(M+1)p+1$ be equal to the pole $k+l+(M+1)p+1$ of $R_3$? In this case $s=l$ and so $m=p$, which contradicts our assumption that $m=tp$ for $t\geq 2$.

\item[$\ast$] Can $k+s+(M+1)p+1$ be equal to the pole $k+(M+1)p+1$ of $R_4$? Clearly no.

\item[$\ast$] Can $k+s+(M+1)p+1$ be equal to a pole $k+l+js+1$ of $P_3$ for some $1\leq j\leq N$? In this case $js=(M-t+2)p$. Since $s>p$, we must have $j<M-t+2$. Let us denote such $j$ by $j^*$ i.e, $1\leq j^*\leq N$, $j^*<M-t+2$ and $j^*s=(M-t+2)p$. We shall discuss the following cases:
\begin{itemize}
	
	\item[$\star$] ${\underline{M=3}}$. Then $t=2$, $M-t+2=3$ and so $j^*=1 \textrm{ or }2$.
	
		\begin{itemize}
				
		 \item[(i)] If $j^*=1$, then $s=3p$ and $l=m+s-p=4p$. Thus Equation (\ref{RP}) becomes
		  \begin{multline*}
              c_1\Big[\frac{k+3p+1}{k+3(N+1)p+1}\frac{(k+4p+1)(k+5p+1)}{(k+7p+1)(k+8p+1)}-\\
              \frac{1}{k+(3N+5)p+1}\frac{(k+p+1)(k+2p+1)}{k+4p+1}\Big]=\\
              c_2\Big[\frac{k+5p+1}{k+8p+1}\prod_{j=1}^{N}\frac{k+3jp+1}{k+(4+3j)p+1}-\frac{k+p+1}{k+4p+1}\prod_{j=1}^{N}\frac{k+(1+3j)p+1}{k+(5+3j)p+1}\Big].	
		\end{multline*}
		It is easy to see that the pole $k+4p+1$ of $R_4$ is canceled by the first term of the numerator of $P_4$. Hence this pole appears in LHS, as a pole of $P_2$, but does not appear in RHS. Therefore $j^*$ cannot be $1$.
		\item[(ii)] $j^*=2$, then $2s=3p$ and $l=p+s$. Equation (\ref{RP}) becomes
				\begin{multline*}
					c_1\Big[\frac{k+s+1}{k+(N+1)s+1}\frac{(k+s+p+1)(k+s+2p+1)}{(k+s+4p+1)(k+s+5p+1)}-\\
					\frac{k+2p+s+1}{k+2p+(N+1)s+1}\frac{(k+p+1)(k+2p+1)}{(k+4p+1)(k+5p+1)}\Big]=\\
					c_2\Big[\frac{k+2p+s+1}{k+s+5p+1}\prod_{j=1}^{N}\frac{k+js+1}{k+p+(j+1)s+1}-\\\frac{k+p+1}{k+4p+1}\prod_{j=1}^{N}\frac{k+p+js+1}{k+2p+(j+1)s+1}\Big].	
				\end{multline*}
				We observe that when $N\geq 3$ the pole $k+s+5p+1$ appears only once in LHS from the denominator of $P_1$, however the same pole appears at least twice in RHS from the denominator of $R_3$ and from taking $j=2$ in the denominator of $P_4$ ($3s=s+2s=s+3p$). Note that this pole is not eliminated by either the numerator of $P_4$ or $R_4$. Now, when $N=2$ ($N$ cannot be $1$ because $N\geq j^*$ and $j^*=2$), Equation (\ref{RP}) becomes 
				\begin{multline*}
					c_1\Big[\frac{k+s+1}{k+3s+1}\frac{(k+s+p+1)(k+s+2p+1)}{(k+s+4p+1)(k+s+5p+1)}-\\
					\frac{k+s+2p+1}{k+2p+3s+1}\frac{(k+p+1)(k+2p+1)}{(k+4p+1)(k+5p+1)}\Big]=\\
					c_2\Big[\frac{k+2p+s+1}{k+s+5p+1}\frac{(k+s+1)(k+2s+1)}{(k+p+2s+1)(k+p+3s+1)}-\\\frac{k+p+1}{k+4p+1}\frac{(k+p+s+1)(k+p+2s+1)}{(k+2p+2s+1)(k+2p+3s+1)}\Big].
				\end{multline*}
				By equating the poles in both sides, we must have $k+3s+1=k+p+2s+1$ i.e., $s=p$ which is impossible because $s>p$.
	\end{itemize}
    We conclude by saying that, when $M=3$ and $j^*s=(M-t+2)p$, the pole $k+s+(M+1)p+1$ cannot be equal to the pole $k+l+js+1$ of $P_3$ for some $1\leq j\leq N$. 
 
 \item[$\star$] $\underline{M\geq 4}$. Here we shall discuss two cases "$t=2$" and $"t\geq 3"$. In fact when $t\geq 3$ the denominator of $P_2$ contains at least $3$ poles one of them is $k+m+(M-t+3)p+1=k+m+j^*s+p+1$ obtained by taking $j=M-t+3$.
  
   \begin{itemize}
	\item[(i)]{$t\geq 3$}.  We shall show that the pole $k+m+j^*s+p+1$ of $P_2$ does not appear in RHS:\\
	$\circ$ Can $k+m+j^*s+p+1$ be equal to the pole $k+l+(M+1)p+1$ of $R_3$? In this case $(j^*-1)s= (M-1)p$. But since $j^*s=(M-t+2)p$, it implies that $s=(3-t)p$ which is impossible because $t\geq 3$.\\
	$\circ$ Can $k+m+j^*s+p+1$ be equal to a pole $k+l+js+1$ of $P_3$ for some $1\leq j\leq N$? In this case $2p=(1+j-j^*)s$, which is possible only if $j=j^*$ i.e., $s=2p$. Thus Equation (\ref{RP}) becomes
	\begin{multline*}
		c_1\Big[\frac{k+2p+1}{k+2(N+1)p+1}\frac{\prod\limits_{j=1}^{t}k+(2+j)p+1}{\prod\limits_{j=M-t+1}^{M}k+(2+t+j)p+1}-\\\frac{k+(t+2)p+1}{k+tp+2(N+1)p+1}\frac{\prod\limits_{j=1}^{t}k+jp+1}{\prod\limits_{j=M-t+1}^{M}k+(t+j)p+1}\Big]=\\	
		c_2\Big[\frac{k+(t+2)p+1}{k+(t+M+2)p+1}\prod_{j=1}^{N}\frac{k+2jp+1}{k+tp+p+js+1}-\\\frac{k+p+1}{k+(M+1)p+1}\prod_{j=1}^{N}\frac{k+(1+2j)p+1}{k+(t+2)p+js+1}\Big].	
	\end{multline*}
	Now we observe that the pole $k+(M+3)p+1$ appears twice in LHS from $j=M-t+1$ in the denominator of $P_1$ and also from $j=M-t+3$ in the denominator of $P_2$. However this same pole appears only once in RHS when $j=j^*$ in the denominator of $P_3$. In fact  $k+tp+p+j^*s+1=k+tp+p+(M-t+2)p+1=k+(M+3)p+1$. Moreover $k+(M+3)p+1$ cannot be equal to $k+(t+2)p+js+1$ for some $1\leq j\leq N$ because, knowing that $j^*s=(M-t+2)p$, we would have $(j^*-j)s=p$ which is impossible since $s>p$. Therefore $k+m+j^*s+p+1$ cannot be equal to $k+l+js+1$ for some $1\leq j\leq N$.\\
	$\circ$ Can $k+m+j^*s+p+1$ be equal to $k+(M+1)p+1$? Obviously not because $k+m+j^*s+p+1=k+(M+3)p+1$.\\
	$\circ$ Can $k+m+j^*s+p+1$ be equal to $k+l+p+js+1$ for some $1\leq j\leq N$? In this case $p=(1+j-j^*)s$ which is impossible because $s>p$.\\
	We conclude by saying that, when $M\geq 4$, $t\geq 3$, and $j^*s=(M-t+2)p$, the pole $k+m+j^*s+p+1$ does not appear in RHS.
	
	\item[(ii)] $t=2$. In this case LHS has at most $6$ poles and at least $5$ poles. In fact the pole of $R_1$ is canceled by the numerator of $P_1$ when $N=1$ and $s=2p$. On the other hand, RHS has at most $2N+2$ poles, and so Equation (\ref{RP}) can't be satisfied if $N=1$. Moreover, since $m=tp=2p$, we have that $l=p+s$ and thus none of the terms of  the numerator of $P_3$ (resp. $P_4$) can cancel the poles of $P_3$ (resp. $P_4$). Therefore if $N\geq 4$, then Equation (\ref{RP}) is not satisfies because RHS has at least $2N\geq 8$ poles. Finally, we have to check the cases $"N=2"$ and $"N=3"$:\\
	$\circ$ $N=2$. Equation (\ref{RP}) becomes
		\begin{multline*}
			c_1\Big[\frac{k+s+1}{k+3s+1}\frac{(k+s+p+1)(k+s+2p+1)}{\left(k+s+(M+1)p+1\right)\left(k+s+(M+2)p+1\right)}-\\
			\frac{k+2p+s+1}{k+2p+3s+1}\frac{(k+p+1)(k+2p+1)}{\left(k+(M+1)p+1\right)\left(k+(M+2)p+1\right)}\Big]=\\
			c_2\Big[\frac{k+2p+s+1}{k+s+(M+2)p+1}\frac{(k+s+1)(k+2s+1)}{(k+p+2s+1)(k+p+3s+1)}-\\\frac{k+p+1}{k+(M+1)p+1}\frac{(k+p+s+1)(k+p+2s+1)}{(k+2p+2s+1)(k+2p+3s+1)}\Big].	
		\end{multline*}
		Now, it is not hard to see that when trying to equate the poles from both sides we reach one of the following contradiction namely either $"2s=Mp\textrm{ and }2s=(M+1)p"$ or $"s=p"$.\\
	$\circ$ $N=3$. Equation (\ref{RP}) becomes
			\begin{multline*}
				c_1\Big[\frac{k+s+1}{k+4s+1}\frac{(k+s+p+1)(k+s+2p+1)}{\left(k+s+(M+1)p+1\right)\left(k+s+(M+2)p+1\right)}-\\
				\frac{k+2p+s+1}{k+2p+4s+1}\frac{(k+p+1)(k+2p+1)}{\left(k+(M+1)p+1\right)\left(k+(M+2)p+1\right)}\Big]=\\
				c_2\Big[\frac{k+2p+s+1}{k+s+(M+2)p+1}\frac{(k+s+1)(k+2s+1)(k+3s+1)}{(k+p+2s+1)(k+p+3s+1)(k+p+4s+1)}-\\\frac{k+p+1}{k+(M+1)p+1}\frac{(k+p+s+1)(k+p+2s+1)(k+p+3s+1)}{(k+2p+2s+1)(k+2p+3s+1)(k+2p+4s+1)}\Big].	
			\end{multline*}
			This equality is possible only if the poles $k+s+(M+2)p+1$ and $k+(M+1)p+1$ in RHS are both canceled so that the numbers of poles on both sides are equal. Notice that these are the only two poles that might be canceled in RHS. Thus we must have $"s=(M+2)p \textrm { or } 2s=(M+2)p"$ and $"s=Mp\textrm{ or } 2s=Mp\textrm { or }3s=Mp"$. The only possible combination is to have $"s=2p\textrm{ and }M=2"$. But this can't be true because $M\geq 4$. 
  \end{itemize}
  
 Hence when $M\geq 4$, $t=2$ and $j^*s=(M-t+2)p$, Equation (\ref{RP}) cannot be satisfied.
\end{itemize} 
	
Therefore, we conclude by saying that the pole $k+s+(M+1)p+1$ cannot be equal to $k+l+js+1$ for some $1\leq j\leq N$ in all possible situations.	

\item[$\ast$] Can $k+s+(M+1)p+1$ be equal to a pole $k+l+p+js+1$ of $P_4$ for some $1\leq j\leq N$? In this case $js=(M-t+1)p$ for some $1\leq j\leq N$. Let us denote such $j$ by $j^*$. Observe that $j^*<(M-t+1)$ because $s>p$. We shall discuss the following cases:

\begin{itemize}
	\item[$\star$] ${\underline{M=3}}$. Then $t=2$ and so $j^*=1$. Moreover we have $m=2p$, $s=2p$ and $l=3p$. Thus Equation(\ref{RP}) becomes
	\begin{multline*}
		c_1\Big[\frac{k+2p+1}{k+2(N+1)p+1}\frac{(k+3p+1)(k+4p+1)}{(k+6p+1)(k+7p+1)}-\\
		\frac{1}{k+2(N+2)p+1}\frac{(k+p+1)(k+2p+1)}{k+5p+1}\Big]=\\
		c_2\Big[\frac{k+4p+1}{k+7p+1}\prod_{j=1}^{N}\frac{k+2jp+1}{k+(3+2j)p+1}-\frac{k+p+1}{k+4p+1}\prod_{j=1}^{N}\frac{k+(1+2j)p+1}{k+(4+2j)p+1}\Big].	
	\end{multline*} 
	It is easy to see that when $N=1$ the pole $k+4p+1$ of $R_1$ is canceled by the numerator of $P_1$ and so it does not appear in LHS. However it appears in RHS as a pole of $R_4$. Now, when $N\geq 2$, LHS has 5 poles but RHS has $2N +2$ poles. Therefore, we conclude that when $M=3$ we cannot have $k+s+(M+1)p+1$ equal to $k+l+p+js+1$ for some $1\leq j\leq N$.
	\item[$\star$] ${\underline{M\geq4}}$. Consider the pole $k+tp+(M-t+2)p+1=k+m+j^*s+p+1$ of $P_2$ obtained by taking $j=M-t+2$. We shall show that this pole does not appear in RHS. 
	\begin{itemize}
		\item[(i)] Can $k+m+j^*s+p+1$ be equal to the pole $k+l+(M+1)p+1$ of $R_3$? In this case $(j^*-1)s=(M-1)p$. But since $j^*s=(M-t+1)p$, we must have $s=(2-t)p$ which is impossible because $t\geq 2$.
		\item[(ii)] Can $k+m+j^*s+p+1$ be equal to the pole $k+(M+1)p+1$ of $R_4$? Obviously not because we would have $2p=p$!
		\item[(iii)] Can $k+m+j^*s+p+1$ be equal to a pole $k+l+p+js+1$ of $P_4$ for some $1\leq j\leq N$? In this case $(j+1-j^*)s=p$ which is not possible because $s>p$.
		\item[(iv)] Can $k+m+j^*s+p+1$ be equal to a pole $k+l+js+1$ of $P_3$ for some $1\leq j\leq N$? In this case $(j+1-j^*)s=2p$. This is possible only when $j=j^*$ i.e., $s=2p$. Here, we shall make the distinction between two cases $"t \textrm{ even}"$ and $"t \textrm{ odd}"$:\\
		$\circ$ If $t=2$ and $N=1$, then LHS has five poles and RHS has four poles. Hence, Equation (\ref{RP}) cannot be satisfied.\\
		$\circ$ If $t$ is even and $N\geq 2$, then we have the following cases: First, when $"M=4,\  t=2"$ (because $t\leq M-1$ and $t$ is even) LHS has exactly six poles, however RHS has either five (when $N=2$) or seven (when $N=3$) or $2N$ (when $N\geq 4$) poles. Therefore Equation (\ref{RP}) is not satisfied. Second, when $"M\geq 5,\  M-2\leq t \leq M-1"$ the pole $k+(t+5)p+1$ appears twice in LHS by taking $j=3$ in the denominator of $P_1$ and $ j=5$ in the denominator of $P_2$, however the same pole appears only once in RHS by taking $j=2$ in the denominator of $P_3$. Hence Equation (\ref{RP}) cannot be satisfied. Finally, when  $"M\geq 5,\ t \leq M-3"$ the pole $k+l+s+1=k+(t+3)p+1$ of $P_3$ (notice that this pole cannot be eliminated by either the numerator of $R_3$ or the numerator of $P_3$) obtained by taking $j=1$ does not appear in LHS since $M-t+1\geq 4$. Thus Equation (\ref{RP}) is again not satisfied.\\
		We conclude by saying that when $t$ is even $k+m+j^*s+p+1$ cannot be equal to $k+l+js+1$ for some $1\leq j\leq N$.\\
		$\circ$ If $t$ is odd (here $t$ must be greater or equal to $3$ because we are assuming $m=tp$ for $t\geq 2$), then Equation (\ref{RP}) becomes
		\begin{multline*}
			c_1\Big[\frac{k+2p+1}{k+2(N+1)p+1}\frac{\prod\limits_{j=1}^{t}k+(2+j)p+1}{\prod\limits_{j=M-t+1}^{M}k+(2+t+j)p+1}-\\\frac{k+(t+2)p+1}{k+tp+2(N+1)p+1}\frac{\prod\limits_{j=1}^{t}k+jp+1}{\prod\limits_{j=M-t+1}^{M}k+(t+j)p+1}\Big]=\\	
			c_2\Big[\frac{k+(t+2)p+1}{k+(t+M+2)p+1}\frac{\prod\limits_{j=1}^{\frac{t+1}{2}}k+2jp+1}{\prod\limits_{j=N-\frac{t+1}{2}+1}^{N}k+(t+1+2j)p+1}-\\\frac{k+p+1}{k+(M+1)p+1}\frac{\prod\limits_{j=1}^{\frac{t+1}{2}}k+(1+2j)p+1}{\prod\limits_{j=N-\frac{t+1}{2}+1}^{N}   k+(t+2+2j)p+1}\Big].
		\end{multline*}
		It is easy to see that LHS has at least $2t+1$ poles (in fact if $t=M-1$, then the first pole of $P_2$ obtained by taking $j=M-t+1=2$ is canceled by the numerator of $R_2$), however RHS has at most $t+3$ poles. Now, $2t+1=t+3$ if and only if $t=2$ which is impossible because we are assuming $t$ is odd. Hence, when $t$ is odd $k+m+j^*s+p+1$ cannot be equal to $k+l+js+1$ for some $1\leq j\leq N$. \\
		We conclude by saying that the pole $k+m+j^*s+p+1$ does not appear in RHS.
		\end{itemize}
	\end{itemize}
This finishes proving that the pole $k+s+(M+1)p+1$ cannot be equal to $l+p+js+1$ for some $1\leq j\leq N$.
\end{itemize} 
Therefore the pole $k+s+(M+1)p+1$ from LHS does not appear in RHS.
\end{itemize}
We conclude by saying that, under the assumption $m=tp$ for $t\geq 2$, Equation (\ref{RP}) cannot be satisfied since we always were able the find a pole from LHS that does not appear in RHS.\\

Therefore Equation (\ref{Prod}) is true if and only if $m=p$, which implies $l=s$ by $(H2)$, and so $c_1=c_2$ by Remark 1, iii). Finally, we obtain the desired result namely
\begin{equation*}
T_{e^{im\theta}f}+T_{e^{il\theta}g}=c\left(T_{e^{ip\theta}\phi}+T_{e^{is\theta}\psi}\right)
\end{equation*}
\end{proof}

\begin{remark}
\item[i)] Theorem 1 can be generalized to sums of more than two Toeplitz operators not without extra efforts in the proof.
\item[ii)] If $m_1, l_1, m_2, l_2, f_1, g_1, f_2, g_2$ are as in Theorem 1, then we will have
\begin{align*}
T_{e^{im_1\theta}f_1}+T_{e^{il_1\theta}g_1}=c\left(T_{e^{ip\theta}\phi}+T_{e^{is\theta}\psi}\right),\end{align*} and 
\begin{align*}
T_{e^{im_2\theta}f_2}+T_{e^{il_2\theta}g_2}=c'\left(T_{e^{ip\theta}\phi}+T_{e^{is\theta}\psi}\right).
\end{align*}
for some constants $c_1, c_2$, and therefore $T_{e^{im_1\theta}f_1}+T_{e^{il_1\theta}g_1}$ and $T_{e^{im_2\theta}f_2}+T_{e^{il_2\theta}g_2}$ commute with each other. Thus, Theorem 1 is a partial confirmation of the conjecture in \cite[p.~263]{lr} which states that if two Toeplitz operators, defined on the Bergman space of the unit disc $\mathbb{D}$, commute with a third one, none of them being the identity, then they commute with each other.
	
\end{remark}

{\small
}
\end{document}